\documentclass[11pt]{amsart}
\usepackage[margin=1.15in]{geometry}
\usepackage{amsmath,amssymb}
\usepackage{booktabs}
\usepackage{graphicx}
\usepackage[hidelinks]{hyperref}
\usepackage{url}

\newtheorem{theorem}{Theorem}
\newtheorem{lemma}[theorem]{Lemma}
\newtheorem{proposition}[theorem]{Proposition}
\newtheorem{remark}[theorem]{Remark}
\newcommand{\Kq}[3]{K_{#1}(#2,#3)}
\newcommand{\Zq}{\mathbb{Z}_q}

\newcommand{\ubkeylegend}{\texttt{c} = monotonicity $K_q(n{+}1,R{+}1)\le K_q(n,R)$; \texttt{d} = Bhandari--Durairajan~\cite{BhandariDurairajan1996}; \texttt{e} = $K_q(n{+}1,R)\le q\,K_q(n,R)$; \texttt{f} = direct sum; \texttt{j} = alphabet product rule; \texttt{n} = K\'eri--\"Osterg\aa rd~\cite{KeriOstergard2005}; \texttt{q} = Rivas Soriano (attributed in~\cite{Keri}); \texttt{z} = K\'eri 2009 updates recorded in~\cite{Keri}}
\newcommand{\ubtable}{%
\begin{tabular}{lccccccc}
\toprule
cell & previous bounds & sphere & new UB & $\Delta$ & $\Delta\%$ & key & via \\
\midrule
$\Kq{5}{11}{5}$ & 103--625 & 86 & \textbf{602} & $-23$ & $3.7\%$ & \texttt{d} & L \\
$\Kq{6}{7}{3}$ & 70--246 & 57 & \textbf{227} & $-19$ & $7.7\%$ & \texttt{e} & L \\
$\Kq{6}{8}{3}$ & 246--1080 & 217 & \textbf{1005} & $-75$ & $6.9\%$ & \texttt{f} & L \\
$\Kq{6}{8}{4}$ & 46--216 & 33 & \textbf{166} & $-50$ & $23.1\%$ & \texttt{e} & L \\
$\Kq{6}{9}{4}$ & 136--738 & 112 & \textbf{660} & $-78$ & $10.6\%$ & \texttt{f} & L \\
$\Kq{6}{9}{5}$ & 32--144 & 21 & \textbf{119} & $-25$ & $17.4\%$ & \texttt{j} & L \\
$\Kq{6}{10}{4}$ & \textbf{441}--2952 & 411 & \textbf{2751} & $-201$ & $6.8\%$ & \texttt{f} & L \\
$\Kq{6}{10}{5}$ & 83--615 & 65 & \textbf{484} & $-131$ & $21.3\%$ & \texttt{f} & L \\
$\Kq{7}{8}{4}$ & 76--343 & 56 & \textbf{316} & $-27$ & $7.9\%$ & \texttt{c} & L \\
$\Kq{7}{9}{4}$ & 264--1843 & 221 & \textbf{1475} & $-368$ & $20.0\%$ & \texttt{f} & L \\
$\Kq{7}{9}{5}$ & 52--323 & 35 & \textbf{240} & $-83$ & $25.7\%$ & \texttt{f} & L \\
$\Kq{7}{10}{5}$ & 160--1225 & 126 & \textbf{980} & $-245$ & $20.0\%$ & \texttt{f} & T \\
$\Kq{8}{6}{4}$ & 15--20 & 7 & \textbf{19} & $-1$ & $5.0\%$ & \texttt{n} & L \\
$\Kq{8}{8}{4}$ & 118--512 & 89 & \textbf{505} & $-7$ & $1.4\%$ & \texttt{c} & L \\
$\Kq{8}{8}{5}$ & 29--90 & 15 & \textbf{79} & $-11$ & $12.2\%$ & \texttt{z} & L \\
$\Kq{8}{10}{5}$ & 287--2461 & 225 & \textbf{1883} & $-578$ & $23.5\%$ & \texttt{f} & T \\
$\Kq{8}{10}{6}$ & 58--342 & 37 & \textbf{324}$^{\downarrow}$ & $-18$ & $5.3\%$ & \texttt{n} & L \\
$\Kq{9}{9}{5}$ & 120--729 & 83 & \textbf{684} & $-45$ & $6.2\%$ & \texttt{c} & T \\
$\Kq{9}{10}{5}$ & 481--3969 & 380 & \textbf{3393} & $-576$ & $14.5\%$ & \texttt{f} & T \\
$\Kq{10}{9}{5}$ & 174--1088 & 121 & \textbf{1055} & $-33$ & $3.0\%$ & \texttt{f} & T \\
$\Kq{10}{10}{5}$ & 632--7106 & 612 & \textbf{5799} & $-1307$ & $18.4\%$ & \texttt{f} & T \\
$\Kq{10}{10}{6}$ & 122--826 & 79 & \textbf{802}$^{\downarrow}$ & $-24$ & $2.9\%$ & \texttt{n} & T \\
$\Kq{12}{6}{4}$ & 30--41 & 13 & \textbf{39} & $-2$ & $4.9\%$ & \texttt{q} & L \\
$\Kq{13}{6}{4}$ & 35--46 & 14 & \textbf{45} & $-1$ & $2.2\%$ & \texttt{q} & L \\
$\Kq{14}{6}{4}$ & 40--52 & 16 & \textbf{50} & $-2$ & $3.8\%$ & \texttt{q} & L \\
$\Kq{15}{6}{4}$ & 46--59 & 18 & \textbf{57} & $-2$ & $3.4\%$ & \texttt{c} & L \\
\bottomrule
\end{tabular}}
\newcommand{\lbtable}{%
\begin{tabular}{lccc}
\toprule
cell & sphere & prev.\ LB & new LB \\
\midrule
$\Kq{6}{8}{2}$ & 2267 & 2276 (\texttt{y}) & \textbf{2367} \\
$\Kq{6}{9}{2}$ & 10653 & 10900 (\texttt{y}) & \textbf{10965} \\
$\Kq{6}{9}{3}$ & 881 & 921 (\texttt{y}) & \textbf{926} \\
$\Kq{6}{10}{3}$ & 3739 & 3815 (\texttt{y}) & \textbf{3836} \\
$\Kq{6}{10}{4}$ & 411 & 417 (\texttt{s}) & \textbf{441} \\
$\Kq{7}{7}{2}$ & 1031 & 1035 (\texttt{x}) & \textbf{1081} \\
$\Kq{7}{8}{2}$ & 5454 & 5457 (\texttt{x}) & \textbf{5631} \\
$\Kq{7}{8}{3}$ & 439 & 457 (\texttt{y}) & \textbf{471} \\
$\Kq{7}{9}{2}$ & 29870 & 29889 (\texttt{s}) & \textbf{30562} \\
$\Kq{7}{9}{3}$ & 2070 & 2077 (\texttt{y}) & \textbf{2143} \\
$\Kq{7}{10}{2}$ & 168041 & 168042 (\texttt{x}) & \textbf{170632} \\
$\Kq{8}{8}{2}$ & 11741 & 11766 (\texttt{y}) & \textbf{12033} \\
$\Kq{8}{8}{3}$ & 813 & 829 (\texttt{y}) & \textbf{856} \\
$\Kq{8}{9}{4}$ & 403 & 409 (\texttt{s}) & \textbf{429} \\
$\Kq{9}{8}{2}$ & 23181 & 23184 (\texttt{x}) & \textbf{23642} \\
$\Kq{9}{8}{3}$ & 1411 & 1413 (\texttt{x}) & \textbf{1464} \\
$\Kq{9}{9}{3}$ & 8537 & 8544 (\texttt{y}) & \textbf{8685} \\
$\Kq{9}{9}{4}$ & 690 & 703 (\texttt{s}) & \textbf{722} \\
$\Kq{9}{10}{3}$ & 54142 & 54144 (\texttt{x}) & \textbf{54600} \\
$\Kq{10}{7}{2}$ & 5666 & 5676 (\texttt{y}) & \textbf{5824} \\
$\Kq{10}{8}{2}$ & 42717 & 42772 (\texttt{y}) & \textbf{43423} \\
$\Kq{10}{9}{2}$ & 333556 & 333560 (\texttt{x}) & \textbf{337394} \\
$\Kq{10}{9}{4}$ & 1123 & 1130 (\texttt{s}) & \textbf{1160} \\
$\Kq{10}{10}{2}$ & 2676660 & 2676660 (\texttt{a}) & \textbf{2699348} \\
$\Kq{10}{10}{4}$ & 6808 & 6886 (\texttt{y}) & \textbf{6915} \\
$\Kq{11}{7}{2}$ & 8977 & 9106 (\texttt{y}) & \textbf{9193} \\
$\Kq{11}{8}{2}$ & 74405 & 74415 (\texttt{s}) & \textbf{75448} \\
$\Kq{12}{8}{2}$ & 123665 & 123772 (\texttt{y}) & \textbf{125156} \\
$\Kq{12}{8}{3}$ & 5512 & 5577 (\texttt{y}) & \textbf{5605} \\
\bottomrule
\end{tabular}\hspace{1.5em}\begin{tabular}{lccc}
\toprule
cell & sphere & prev.\ LB & new LB \\
\midrule
$\Kq{13}{6}{2}$ & 2162 & 2169 (\texttt{x}) & \textbf{2233} \\
$\Kq{13}{7}{2}$ & 20183 & 20189 (\texttt{x}) & \textbf{20545} \\
$\Kq{13}{8}{2}$ & 197562 & 197563 (\texttt{x}) & \textbf{199633} \\
$\Kq{13}{8}{3}$ & 8085 & 8086 (\texttt{x}) & \textbf{8193} \\
$\Kq{14}{6}{2}$ & 2881 & 2955 (\texttt{y}) & \textbf{2964} \\
$\Kq{14}{7}{2}$ & 28952 & 29273 (\texttt{y}) & \textbf{29404} \\
$\Kq{14}{8}{2}$ & 305105 & 305294 (\texttt{y}) & \textbf{307909} \\
$\Kq{15}{6}{2}$ & 3766 & 3812 (\texttt{y}) & \textbf{3862} \\
$\Kq{15}{8}{2}$ & 457578 & 457584 (\texttt{x}) & \textbf{461294} \\
$\Kq{16}{6}{2}$ & 4841 & 4848 (\texttt{x}) & \textbf{4951} \\
$\Kq{16}{7}{2}$ & 55566 & 55600 (\texttt{y}) & \textbf{56237} \\
$\Kq{16}{8}{2}$ & 668894 & 669207 (\texttt{y}) & \textbf{673723} \\
$\Kq{17}{7}{2}$ & 74757 & 75429 (\texttt{y}) & \textbf{75559} \\
$\Kq{17}{8}{2}$ & 955977 & 955978 (\texttt{x}) & \textbf{962145} \\
$\Kq{17}{8}{3}$ & 29475 & 29478 (\texttt{x}) & \textbf{29652} \\
$\Kq{17}{8}{4}$ & 1446 & 1458 (\texttt{y}) & \textbf{1464} \\
$\Kq{18}{6}{2}$ & 7664 & 7741 (\texttt{y}) & \textbf{7804} \\
$\Kq{18}{8}{2}$ & 1339162 & 1339650 (\texttt{y}) & \textbf{1346931} \\
$\Kq{19}{6}{2}$ & 9468 & 9479 (\texttt{x}) & \textbf{9624} \\
$\Kq{19}{7}{2}$ & 128968 & 128972 (\texttt{x}) & \textbf{130081} \\
$\Kq{19}{7}{3}$ & 4236 & 4237 (\texttt{x}) & \textbf{4282} \\
$\Kq{19}{8}{2}$ & 1842635 & 1842639 (\texttt{x}) & \textbf{1852296} \\
$\Kq{20}{7}{2}$ & 165911 & 167165 (\texttt{y}) & \textbf{167204} \\
$\Kq{20}{7}{3}$ & 5166 & 5174 (\texttt{x}) & \textbf{5215} \\
$\Kq{20}{8}{2}$ & 2494884 & 2495614 (\texttt{y}) & \textbf{2506759} \\
$\Kq{21}{6}{2}$ & 14012 & 14131 (\texttt{y}) & \textbf{14201} \\
$\Kq{21}{7}{3}$ & 6243 & 6249 (\texttt{x}) & \textbf{6294} \\
$\Kq{21}{8}{2}$ & 3329184 & 3329193 (\texttt{x}) & \textbf{3343629} \\
$\Kq{21}{8}{3}$ & 82338 & 82341 (\texttt{x}) & \textbf{82595} \\
\bottomrule
\end{tabular}}

\title[New bounds on covering codes for $5\le q\le 21$]{New upper and lower
bounds on covering codes $K_q(n,R)$\\ for alphabets of size $5\le q\le 21$}
\author{M\'ark Marosi}
\thanks{Department of Artificial Intelligence and Systems Engineering,
Budapest University of Technology and Economics (BME), Budapest, Hungary.
Email: \texttt{marosi@mit.bme.hu}.}
\subjclass[2020]{94B75 (primary), 05B40, 90C22, 90C59}
\keywords{Covering codes, football pool problem, semidefinite programming,
local search, large neighbourhood search}
\date{September 2026}

\begin{document}

\begin{abstract}
Let $K_q(n,R)$ denote the minimum cardinality of a $q$-ary code of length
$n$ with covering radius $R$. We improve the known bounds on $K_q(n,R)$ in
$84$ cases ($83$ distinct cells). Twenty-six upper bounds for $5\le q\le
15$ are established by explicit codes, found by local search and by a large
neighbourhood search whose evaluations are exact coverage counts over the
whole space $\mathbb{Z}_q^n$, computed by a coordinate-wise transform; two
examples are $\Kq{7}{9}{5}\le 240$ (previously $323$) and
$\Kq{8}{10}{5}\le 1883$ (previously $2461$). Fifty-eight lower bounds for
$6\le q\le 21$ are obtained from the semidefinite programming bound of
Gijswijt and Polak, whose published computations cover $q\le 5$, by solving
the symmetry-reduced program in multiprecision arithmetic and rounding each
dual solution to an exact rational certificate; every certificate is
checked by a standalone program in exact arithmetic. The cell
$\Kq{6}{10}{4}$ is improved from both sides, from $417$--$2952$ to
$441$--$2751$. The codes, the certificates, and the checkers are provided
as ancillary files.
\end{abstract}

\maketitle

\section{Introduction}

Let $\Zq^n$ denote the Hamming space of $q$-ary words of length $n$. A code
$C\subseteq \Zq^n$ has \emph{covering radius} at most $R$ if every word of
$\Zq^n$ is within Hamming distance $R$ of some codeword, and
\[
K_q(n,R) \;=\; \min\{\,|C| : C\subseteq \Zq^n \text{ has covering radius}
\le R\,\}.
\]
The binary case $K_2(n,1)$ is the domination number of the hypercube, and
$K_3(n,1)$ is the football pool problem; see~\cite{CHLL} for the general
theory.

Upper bounds on $K_q(n,R)$ are established by explicit codes and lower
bounds by counting or convex-relaxation arguments. The known bounds for
$q\ge 3$ are collected in the tables of K\'eri~\cite{Keri}, which extend
those of~\cite{CHLL} and were last revised in November 2011; the tables
attribute $96$ of their upper bounds for $q\ge 6$
to~\cite{KeriOstergard2005}. Recent lower-bound work concerns small
alphabets: Wu and Chen~\cite{WuChen} improved binary lower bounds, and
Gijswijt and Polak~\cite{GijswijtPolak} obtained new lower bounds for
$q\le 5$ from a semidefinite programming hierarchy. Florath~\cite{Florath}
formalizes covering-code bounds in a proof assistant. For $q\ge 6$ we know
of no published improvement on either side since 2011, and of none on the
upper-bound side for $q\ge 5$; a search of arXiv, DBLP, OpenAlex,
Lobstein's covering-radius bibliography~\cite{Lobstein}, and the
publication lists of the authors active in the area, carried out in August
2026, found none.

The contributions of this paper are as follows.
\begin{enumerate}
\item Twenty-six new upper bounds for $5\le q\le 15$
(Table~\ref{tab:ub}), each established by an explicit code. The codes were
found by a local search (Section~\ref{sec:ls}), effective for $q^n\lesssim
10^8$, and by a large neighbourhood search (Section~\ref{sec:lns}) whose
evaluations are exact coverage counts over all of $\Zq^n$, which extends
the searchable range to $q^n=10^{10}$.
\item Fifty-eight new lower bounds for $6\le q\le 21$
(Table~\ref{tab:lb}), obtained by solving the symmetry-reduced semidefinite
program of~\cite{GijswijtPolak} in multiprecision arithmetic and rounding
each numerical dual solution to an exact rational certificate
(Section~\ref{sec:lb}). Each certificate is checked by a standalone program
that rebuilds the program from $(q,n,R)$ and verifies the certificate in
exact arithmetic.
\item Every code was verified by four methods, and every certificate by
the exact checker, from the published files
(Section~\ref{sec:verify}). The codes, the certificates, and both checkers
are ancillary files of this paper.
\end{enumerate}

Section~\ref{sec:prelim} fixes notation, Section~\ref{sec:results} states
the results, Sections~\ref{sec:ls} and~\ref{sec:lns} describe the two
upper-bound searches, Section~\ref{sec:lb} the lower-bound computation, and
Section~\ref{sec:verify} the verification. Implementation measurements are
collected in Appendix~\ref{app:impl}, the certified program is stated in
Appendix~\ref{app:sdp}, and the code establishing $\Kq{6}{8}{4}\le 166$ is
printed in Appendix~\ref{app:code}.

\section{Preliminaries}\label{sec:prelim}

Throughout, $q\ge 2$ and $\Zq=\{0,\dots,q-1\}$; $d(x,y)$ denotes Hamming
distance on $\Zq^n$, $B_R(x)=\{y: d(x,y)\le R\}$ the Hamming ball, and
$|B_R|=\sum_{i\le R}\binom{n}{i}(q-1)^i$ its volume. Counting gives the
\emph{sphere-covering bound}
$K_q(n,R)\ \ge\ \lceil q^n/|B_R|\rceil$.

The constructions that propagate upper bounds through the tables
of~\cite{Keri} are the \emph{direct sum}
\begin{equation}\label{eq:directsum}
K_q(n_1+n_2,\,R_1+R_2)\;\le\;K_q(n_1,R_1)\cdot K_q(n_2,R_2),
\end{equation}
the inequalities $K_q(n+1,R)\le q\,K_q(n,R)$ and $K_q(n+1,R+1)\le
K_q(n,R)$, and product rules over factorizations of the alphabet;
see~\cite{CHLL,Keri} for these and for the source annotations (``keys'')
of~\cite{Keri} reproduced in the tables below. For a code $C$ we write
$\mathrm{cnt}(w)=\#\{c\in C: d(w,c)\le R\}$ for the number of codewords
covering a word $w$; $C$ has covering radius at most $R$ if and only if
$\mathrm{cnt}(w)>0$ for every $w$.

\section{The new bounds}\label{sec:results}

Tables~\ref{tab:ub} and~\ref{tab:lb} list the new bounds and
Figure~\ref{fig:summary} shows both sides. The cell $\Kq{6}{10}{4}$ is
improved from both sides, from $417$--$2952$ to $441$--$2751$, and is the
only cell appearing in both tables.

\emph{Upper bounds.} For $24$ of the $26$ bounds of Table~\ref{tab:ub}, a
search for a code of size $M-1$ with the final computational budget found
none. For the two bounds marked ${}^{\downarrow}$ the search was still
producing smaller codes when this version was prepared. The largest
relative improvements occur on cells whose previous bound was inherited
from a general construction (keys \texttt{c}, \texttt{d}, \texttt{e},
\texttt{f}, \texttt{j}); the cells whose previous bound was obtained by
search in~\cite{KeriOstergard2005} or recorded in~\cite{Keri} (keys
\texttt{n}, \texttt{q}, \texttt{z}) improve by between $2.2\%$ and
$12.2\%$. The two searches interact through the tables: the new
$\Kq{7}{8}{4}\le 316$ gives $\Kq{7}{9}{5}\le 316$ by monotonicity, and the
search in that cell, started from the propagated code, reached
$\Kq{7}{9}{5}\le 240$. The closure of all new bounds under the rules of
Section~\ref{sec:prelim} gives no further improvement beyond the
observation in Remark~\ref{rem:keri17}.

\emph{Lower bounds.} All $58$ improved cells carry K\'eri keys \texttt{y},
\texttt{x}, \texttt{s} or \texttt{a}, that is, their previous lower bound
was the sphere-covering bound or a counting refinement of it. The entire
family $(n,R)=(8,2)$, $6\le q\le 21$, is improved, by increments from
$+91$ at $q=6$ to $+14436$ at $q=21$; the largest single increment is
$\Kq{10}{10}{2}\ge 2699348$, previously $2676660$. No cell whose previous
lower bound is due to Haas, Halupczok and Schlage-Puchta~\cite{HHSP2009}
(key \texttt{m}) is improved; Section~\ref{sec:lb:limits} reports the
values obtained on such cells.

\begin{table}[t]
\centering
\small
\ubtable
\caption{New upper bounds. ``Previous bounds'' are the lower and upper
bounds of~\cite{Keri}; a bold lower bound is from Table~\ref{tab:lb}.
``Sphere'' is $\lceil q^n/|B_R|\rceil$. ``Key'' is the source annotation
of the previous upper bound in~\cite{Keri}: \ubkeylegend. ``Via'' is the
method that found the code: L $=$ the local search of
Section~\ref{sec:ls}, T $=$ the large neighbourhood search of
Section~\ref{sec:lns}. Percentages are rounded to one decimal. For the two
bounds marked ${}^{\downarrow}$ the search was still producing smaller
codes when this version was prepared; for every other bound a search at
$M-1$ with the final budget found no code.}
\label{tab:ub}
\end{table}

\begin{table}[t]
\centering
\footnotesize
\lbtable
\caption{New lower bounds, each certified by an exact rational dual
solution of the program of Appendix~\ref{app:sdp}. ``Sphere'' is
$\lceil q^n/|B_R|\rceil$; ``prev.\ LB'' is the value of~\cite{Keri} with
its key: \texttt{y} = van Wee-type excess counting~\cite{vanWee}
(our reading of the legend of~\cite{Keri}; see also~\cite[Ch.~6]{CHLL}),
\texttt{x} = Chen and Honkala~\cite{ChenHonkala1990}, \texttt{s} = Lang,
Quistorff and Schneider~\cite{LangQuistorffSchneider2007,
LangQuistorffSchneider2008}, \texttt{a} = sphere-covering bound. The exact
rational value of the semidefinite bound (a bound on $K_q(n,R)^3$) is
stored in each certificate.}
\label{tab:lb}
\end{table}

\begin{figure}[t]
\centering
\begin{minipage}[t]{0.48\textwidth}
\centering
\includegraphics[width=\textwidth]{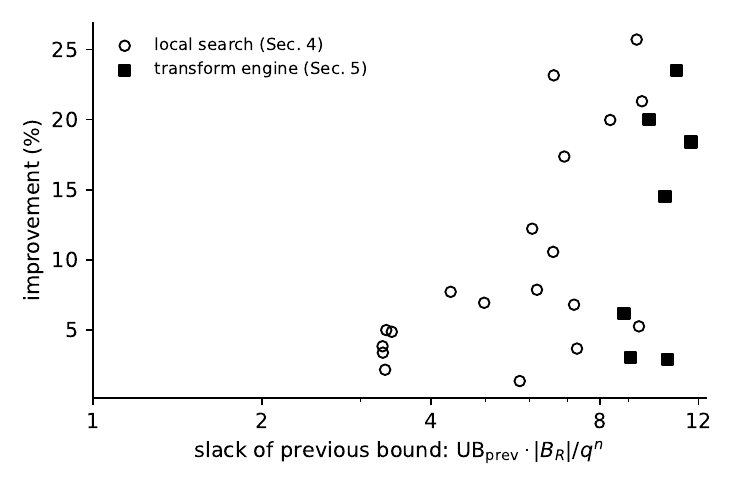}\\[-1mm]
{\small (a)}
\end{minipage}\hfill
\begin{minipage}[t]{0.48\textwidth}
\centering
\includegraphics[width=\textwidth]{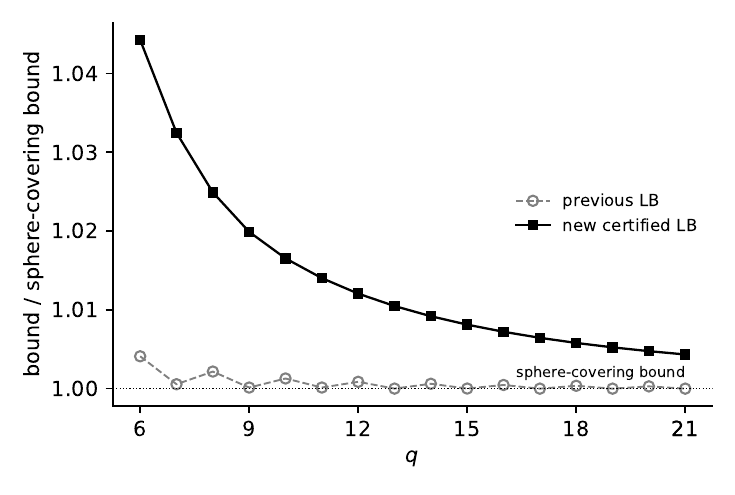}\\[-1mm]
{\small (b)}
\end{minipage}
\caption{(a) Relative improvement of each upper bound of
Table~\ref{tab:ub} against the ratio of the previous bound to the
sphere-covering bound; squares mark the cells found by the large
neighbourhood search of Section~\ref{sec:lns}. (b) The lower bounds for
$(n,R)=(8,2)$, $6\le q\le 21$: previous bound, sphere-covering bound, and
the certified semidefinite bound of Table~\ref{tab:lb}.}
\label{fig:summary}
\end{figure}

\begin{remark}\label{rem:keri17}
The tabulated bound $\Kq{17}{7}{2}\le 252735$ of~\cite{Keri} is weaker
than the bound $17\cdot \Kq{17}{6}{2}\le 17\cdot 14424 = 245208$ that
follows from the tabulated $\Kq{17}{6}{2}\le 14424$ and the inequality
$K_q(n{+}1,R)\le q\,K_q(n,R)$. This involves no new construction.
\end{remark}

\section{Upper bounds by local search}\label{sec:ls}

\subsection{The search}\label{sec:ls:search}

For fixed $(q,n,R,M)$ the search looks for a code of size $M$ minimizing
the number of uncovered words. The state is a multiset of $M$ codewords
together with $\mathrm{cnt}(w)$ for every $w\in\Zq^n$; a move changes one
coordinate of one codeword.

\begin{lemma}\label{lem:move}
Let $c'$ be obtained from $c$ by setting coordinate $p$ to $v\ne c_p$. The
set of words leaving the radius-$R$ ball of $c$ and the set of words
entering the ball of $c'$ are both in bijection with the words at distance
exactly $R$ from $c$ in the coordinates other than $p$. Consequently a move
updates exactly $2S$ counters, $S=\binom{n-1}{R}(q-1)^R$, the two sets are
disjoint, and distinct updates address distinct words.
\end{lemma}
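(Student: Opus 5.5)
Write $c=(c_p,u)$ with $u\in\Zq^{n-1}$ the coordinates other than $p$, and $c'=(v,u)$. For a word $w=(w_p,x)$ we have $d(w,c)=[w_p\ne c_p]+d(x,u)$ and $d(w,c')=[w_p\ne v]+d(x,u)$. Everything reduces to this decomposition and a case analysis on $w_p$.

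\emph{Words leaving the ball of $c$.} These are the $w$ with $d(w,c)\le R$ and $d(w,c')>R$. Since the two distances differ by at most one, this forces $d(w,c)=R$ and $d(w,c')=R+1$. Now $d(w,c')=d(w,c)+1$ holds only if $w_p=c_p$, because then $[w_p\ne c_p]=0$ and $[w_p\ne v]=1$. Hence the leaving words are exactly $\{(c_p,x): d(x,u)=R\}$, and $x\mapsto(c_p,x)$ is the bijection with the words at distance exactly $R$ from $c$ in the coordinates other than $p$.

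\emph{Words entering the ball of $c'$.} By the symmetric argument these are exactly $\{(v,x): d(x,u)=R\}$, with the bijection $x\mapsto(v,x)$.

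\emph{Counting and disjointness.} The number of $x\in\Zq^{n-1}$ with $d(x,u)=R$ is $\binom{n-1}{R}(q-1)^R=S$, so each set has $S$ elements. The two sets are disjoint because their $p$-th coordinates are $c_p\ne v$. Every other word has $\mathrm{cnt}(w)$ unchanged, since its membership in the ball of the moving codeword does not change and all other codewords are fixed (this holds as a multiset). The move therefore decrements $S$ counters and increments $S$ counters, all at $2S$ pairwise distinct words.

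The only point needing care is the step showing that no word other than those at distance exactly $R$ changes status. This rests on $|d(w,c)-d(w,c')|\le1$ together with the sign condition on $w_p$. Words with $w_p\notin\{c_p,v\}$ keep the same distance to both $c$ and $c'$, so they never change status. If $R=n$ the sets are empty, since $\binom{n-1}{n}=0$, which is consistent with every ball being the whole space.
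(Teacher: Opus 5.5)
Your proof is correct and takes essentially the same approach as the paper. The paper uses the identity $d(w,c')=d(w,c)+[\,w_p=c_p\,]-[\,w_p=v\,]$ to characterize the leaving words ($d(w,c)=R$, $w_p=c_p$) and the entering words ($d(w,c')=R$, $w_p=v$), and then obtains the bijections, disjointness and injectivity by restricting to the coordinates other than $p$, which is what your decomposition $d(w,c)=[\,w_p\ne c_p\,]+d(x,u)$ does.
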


\begin{proof}
Since $d(w,c') = d(w,c) + [\,w_p=c_p\,] - [\,w_p=v\,]$, a word leaves the
ball of $c$ if and only if $d(w,c)=R$ and $w_p=c_p$, and enters the ball
of $c'$ if and only if $d(w,c')=R$ and $w_p=v$. Writing such a word as its
restriction to the coordinates other than $p$ (at distance exactly $R$
from the restriction of $c$) together with its value at $p$ gives the
bijections, disjointness, and injectivity. (For $R\ge n$ both sets are
empty; all cells considered here have $R<n$.)
\end{proof}

The search is a focused local search of WalkSAT type, related to the
simulated annealing and tabu searches used to construct many entries of
the tables~\cite{vanLaarhoven,Ostergard}: select a random uncovered word
$u$; take as candidate moves every codeword at distance exactly $R+1$ from
$u$ moved one coordinate towards $u$ (if no codeword is that close, the
codewords at minimum distance from $u$); evaluate all candidates exactly;
apply a best candidate, ties broken uniformly at random; when no progress
has been made for a fixed number of iterations, move one codeword onto an
uncovered word.

The initial code for a cell is rebuilt from the construction that gave the
tabulated bound (for $\Kq{6}{10}{4}$, the direct sum~\eqref{eq:directsum}
of an optimal $\Kq{6}{4}{1}$ code and a $\Kq{6}{6}{3}$ code of the
tabulated size, both found by search and verified). When a code of size
$M$ is found, the codeword that is the sole coverer of the fewest words is
deleted and the search continues at $M-1$.

\subsection{Evaluation of candidate moves}\label{sec:ls:eval}

Candidate evaluation takes at least $92.9\%$ of the run time on the
reference cells of Appendix~\ref{app:impl:cpu}. The following
reformulation computes the same quantities with less work.

\begin{lemma}\label{lem:shared}
With $D(w)=\{j: w_j\neq c_j\}$, a word $w$ leaves the ball of $c$ under
the move at coordinate $p$ if and only if $|D(w)|=R$ and $p\notin D(w)$;
it enters the ball of the moved codeword if and only if $|D(w)|=R+1$,
$p\in D(w)$ and $w_p=v$.
\end{lemma}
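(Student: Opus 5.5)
The plan is to reduce Lemma~\ref{lem:shared} to the identity used in the proof of Lemma~\ref{lem:move}, rewritten in terms of $D(w)=\{j: w_j\ne c_j\}$. Here $c'$ agrees with $c$ except that $c'_p=v\ne c_p$. Since $d(w,c)=|D(w)|$, that identity reads
\[
d(w,c') = |D(w)| + [\,w_p=c_p\,] - [\,w_p=v\,].
\]
The case analysis then runs on the value of $w_p$, which falls into exactly one of three classes: $w_p=c_p$, $w_p=v$, or $w_p\notin\{c_p,v\}$. Note that $p\notin D(w)$ is equivalent to $w_p=c_p$.

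\emph{Leaving.} A word leaves the ball of $c$ when $d(w,c)\le R$ and $d(w,c')>R$.
\begin{itemize}
\item If $w_p=c_p$, then $d(w,c')=|D(w)|+1$. The word leaves exactly when $|D(w)|\le R<|D(w)|+1$, that is, when $|D(w)|=R$.
\item If $w_p=v$, then $d(w,c')=|D(w)|-1$, so the distance does not increase and the word cannot leave.
\item If $w_p\notin\{c_p,v\}$, then $d(w,c')=|D(w)|$, so the word cannot leave.
\end{itemize}
Hence a word leaves if and only if $|D(w)|=R$ and $p\notin D(w)$.

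\emph{Entering.} A word enters the ball of $c'$ when $d(w,c')\le R$ and $d(w,c)>R$. The same three cases show that $d(w,c')<d(w,c)$ only when $w_p=v$. In that case $p\in D(w)$, because $v\ne c_p$, and $d(w,c')=|D(w)|-1$. The word therefore enters exactly when $|D(w)|-1\le R<|D(w)|$, that is, when $|D(w)|=R+1$. Conversely, $|D(w)|=R+1$, $p\in D(w)$ and $w_p=v$ give $d(w,c)=R+1>R$ and $d(w,c')=R$, so the word enters.

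No step is a real obstacle. The only point needing care is checking that the three cases on $w_p$ are exhaustive and disjoint, and that $w_p=v$ forces $p\in D(w)$. This is also where the assumption $v\ne c_p$ is used.
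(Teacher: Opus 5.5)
Your proof is correct and follows the paper's route. The paper's proof notes $|D(w)|=d(w,c)$ and applies the identity $d(w,c')=d(w,c)+[\,w_p=c_p\,]-[\,w_p=v\,]$ from Lemma~\ref{lem:move}; your three-case analysis on $w_p$, with the observation that $w_p=v$ forces $p\in D(w)$ because $v\ne c_p$, spells out that same argument in full.
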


\begin{proof}
$|D(w)|=d(w,c)$, and the move changes the disagreement set only at
coordinate $p$; apply the distance identity in the proof of
Lemma~\ref{lem:move}.
\end{proof}

Let $T=\#\{w: |D(w)|=R,\ \mathrm{cnt}(w)=1\}$ and
$H_j=\#\{w: |D(w)|=R,\ \mathrm{cnt}(w)=1,\ j\in D(w)\}$. By
Lemma~\ref{lem:shared} the number of words uncovered by the move at $p$
equals $T-H_p$, for every $p$ simultaneously: a leaving word with
$\mathrm{cnt}(w)=1$ is covered by $c$ alone, and it is not re-covered by
the moved codeword since $w_p=c_p\ne v$. One enumeration of the
distance-$R$ sphere of $c$ therefore gives the loss of all $n(q-1)$
single-coordinate moves of $c$. The gain of a move depends on the new
value $v$ and concerns only words with $\mathrm{cnt}(w)=0$, so it is
obtained from an explicitly maintained list $U$ of uncovered words in
$O(n|U|)$ operations. Evaluation reads only whether a counter is $0$, $1$
or larger, so a two-bit array $\min(\mathrm{cnt}(w),2)$ serves the reads;
on $\Kq{8}{9}{4}$ this reduces the memory read during evaluation from
$268$\,MB to $33.5$\,MB.

Each reformulation computes the same objective differences as the
reference implementation and therefore follows the same search trajectory
from the same seed; this was checked on $24$ configurations with $2\le
q\le 7$, including $R=0$ and $R=n-1$. The resulting speedup at identical
trajectory is $2$--$10$ times on the reference cells
(Appendix~\ref{app:impl:cpu}). Evaluating all $n(q-1)$ moves of the
candidate codewords instead of the $n$ moves towards $u$ changes the
search; it reduces the time to a fixed target on the two larger reference
cells and increases it on $\Kq{6}{6}{3}$ (Table~\ref{tab:engttt}).

\subsection{Structured codes}\label{sec:ls:structure}

Two restricted searches complement the search over arbitrary subsets of
$\Zq^n$.

\emph{Unions of cosets of a linear code.} When $q$ is a prime power and
$M=c\,q^k$, we search codes of the form $C=\bigcup_{i=1}^{c}(x_i+L)$ with
$L$ a linear $[n,k]_q$ code. Such a $C$ has covering radius at most $R$ if
and only if the syndromes of $x_1,\dots,x_c$ cover the $q^{n-k}$ syndromes
under the coset-leader weights of $L$, a problem on $q^{n-k}$ rather than
$q^n$ elements. This search reproduces the tabulated $\Kq{8}{8}{4}\le
512=8^3$ and $\Kq{5}{11}{5}\le 625=5^4$ in under a second, whereas the
unrestricted search did not reach these sizes; the searches leading to
$505$ and $602$ start from these codes.

\emph{Codes invariant under translation by $\mathbf{1}$.} A second search
works with codes of the form $C=C_0\cup C_1$, where $C_0$ is invariant
under $x\mapsto x+\mathbf{1}$ and is represented in the quotient of size
$q^{n-1}$, and $C_1$ is an arbitrary set of codewords. On $\Kq{6}{8}{4}$,
neither a fully invariant code nor the unrestricted search reached the
target size in our runs, while $C_0\cup C_1$ with a small $C_1$ did so
repeatedly.

\subsection{Configuration}\label{sec:ls:config}

The search program combines four implementations: the search of
Section~\ref{sec:ls:eval}, a SIMD variant of it, and the two restricted
searches of Section~\ref{sec:ls:structure}. The combination was chosen by
comparing the four on a fixed set of instances with held-out evaluation
instances and scoring by verified output only (Appendix~\ref{app:impl:arena});
no single implementation performed best on all instances. A run first
executes the coset search over all feasible $k$ and builds direct sums
over all splits of $n$ and $R$ from recorded codes, then runs a set of
independent search processes whose composition depends on $q^n$ and on
$q^n/(M|B_R|)$; the final code is re-read from disk, duplicates are
removed, and coverage is recomputed by marking balls.

The nineteen codes of Table~\ref{tab:ub} marked L were found by this
program on $64$ CPU cores, with between seconds and a few core-days per
cell; the seven codes marked T are the subject of
Section~\ref{sec:lns}.

\section{Upper bounds by large neighbourhood search with exact coverage
counts}\label{sec:lns}

For $q^n\gtrsim 10^8$ the search of Section~\ref{sec:ls} makes little
progress: the arrays of each process exceed the CPU caches and each move
requires a walk over a sphere of $S$ words at memory latency. (The one
such cell in Table~\ref{tab:ub} found by that search is $\Kq{8}{10}{6}$,
where the radius-$6$ balls are so large that the uncovered set stays
small.) Two ports of the local search to a GPU were implemented and
measured (Appendix~\ref{app:impl:gpu}); their throughput was at most
$1.24$ times that of the $64$-core CPU. The method of this section instead
computes exact coverage counts for every word of $\Zq^n$ at once.

\subsection{Distance-count transforms}

For $S\subseteq\Zq^n$ and $0\le d\le R$ let
$N_d^S(x)=\#\{u\in S: d(x,u)=d\}$.

\begin{proposition}\label{prop:dct}
Let $A_0$ be the indicator array of $S$ on $\Zq^n$ and $A_1=\dots=A_R=0$.
For each axis $j=1,\dots,n$ and every one-dimensional fiber
$\{x+t e_j\}_{t\in\mathbb{Z}_q}$, perform, for $d=R,R-1,\dots,1$ in this
order,
\[
A_d[a] \;\leftarrow\; A_d[a] + \Bigl(\textstyle\sum_b A_{d-1}[b]\Bigr) -
A_{d-1}[a],
\]
where $a,b$ range over the fiber and the sum is formed before any write in
that fiber and grade. After all $n$ axes, $A_d = N_d^S$ for $0\le d\le R$.
The number of additions is $\Theta(nRq^n)$.
\end{proposition}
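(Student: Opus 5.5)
The plan is an induction over the axes, with the invariant that after processing axes $1,\dots,j$, each array entry $A_d[x]$ counts words of $S$ that are at distance exactly $d$ from $x$ on coordinates $1,\dots,j$ and agree with $x$ on coordinates $j+1,\dots,n$. Formally, define for $0\le j\le n$
\[
N_d^{(j)}(x)=\#\{u\in S:\ d_{\le j}(x,u)=d,\ u_i=x_i \text{ for } i>j\},
\]
where $d_{\le j}$ is the Hamming distance restricted to the first $j$ coordinates. The claim to prove is that after the $j$-th axis pass, $A_d=N_d^{(j)}$ for all $0\le d\le R$. At $j=0$ this is the initialization: $N_0^{(0)}$ is the indicator of $S$, and $N_d^{(0)}=0$ for $d\ge 1$. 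At $j=n$ it is the statement $A_d=N_d^S$.

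For the inductive step from $j-1$ to $j$, split the words $u$ counted by $N_d^{(j)}(x)$ according to whether $u_j=x_j$.
\begin{itemize}
\item If $u_j=x_j$, then $u$ is counted by $N_d^{(j-1)}(x)$.
\item If $u_j=t\ne x_j$, then $u$ is counted by $N_{d-1}^{(j-1)}(x+(t-x_j)e_j)$, because moving $x$ along axis $j$ to agree with $u_j$ removes exactly one disagreement. This gives a bijection.
\end{itemize}
Hence
\[
N_d^{(j)}(x)=N_d^{(j-1)}(x)+\sum_{b\ne a}N_{d-1}^{(j-1)}(b),
\]
where $a$ is the position of $x$ in its axis-$j$ fiber and $b$ ranges over the other positions of that fiber. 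This is exactly $\sum_b A_{d-1}[b]-A_{d-1}[a]$ added to $A_d[a]$. For $d=0$ the recurrence gives $N_0^{(j)}=N_0^{(j-1)}$, which is consistent with $A_0$ never being updated.

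The step that needs care is the in-place ordering. Grade $d$ must read the \emph{old} values of grade $d-1$, that is, the level-$(j-1)$ values. Processing $d=R,R-1,\dots,1$ in descending order ensures this: when grade $d$ is written, grade $d-1$ has not yet been touched in that fiber. Within a grade, the fiber sum is formed before any write, as the statement stipulates. The fibers along axis $j$ partition $\Zq^n$, and each update reads and writes only within its own fiber. So the order in which fibers are processed is irrelevant, and the whole pass implements the recurrence simultaneously for all $x$.

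For the operation count, each axis has $q^{n-1}$ fibers. For each fiber and each grade $d=1,\dots,R$, computing the sum takes $q-1$ additions and the $q$ updates take $O(q)$ operations. This gives $\Theta(q)$ per fiber and grade, hence $\Theta(Rq^n)$ per axis and $\Theta(nRq^n)$ in total. The bound is tight from below because every update is actually performed.
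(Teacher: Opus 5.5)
Your proof is correct and is essentially the paper's argument. Both use induction over the processed axes with the invariant that $A_d(x)$ counts the $u\in S$ differing from $x$ in exactly $d$ of the first $j$ coordinates and agreeing on the rest, justify the in-place update by the descending order of $d$ and the pre-formed fiber sum, and obtain $\Theta(nRq^n)$ from a per-axis count; your explicit bijection for the case $u_j\ne x_j$ just spells out what the paper states in one clause.
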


\begin{proof}
By induction on the number of processed axes, after axes $1,\dots,j$ the
entry $A_d(x)$ counts the $u\in S$ that differ from $x$ in exactly $d$ of
the first $j$ coordinates and agree with $x$ in the remaining ones: the
update adds, at each position $a$ of the fiber, the number of such $u$ at
grade $d-1$ over the first $j-1$ axes whose $j$-th coordinate differs from
$a$. The descending order of $d$ ensures that the update of $A_d$ reads
$A_{d-1}$ as left by the previous axis, since $A_{d-1}$ is overwritten only
after every $A_d$ that reads it; $A_0$ is never modified. Forming the
fiber sum before writing ensures that no position reads a partially
updated neighbour. Each axis touches every entry of $A_1,\dots,A_R$ a
bounded number of times, which gives the operation count.
\end{proof}

The transform is the fast product-domain technique of Yates~\cite{Yates}
applied to the filtration of $\Zq^n$ by Hamming distance and truncated at
grade $R$. Its inner loop is branchless and identical on all $q^{n-1}$
fibers, and it is bandwidth-bound on a GPU: on the machine of
Appendix~\ref{app:impl} one transform takes $63$\,ms at $q^n=7^{10}$ and
$143$\,ms at $8^{10}$ with $32$-bit counters ($4(R+1)+2$ bytes per word of
$\Zq^n$).

Two instances of Proposition~\ref{prop:dct} are used. With $S$ the set of
uncovered words, $g(x)=\sum_{d\le R}N_d^S(x)$ is the number of words that
a codeword placed at $x$ would newly cover, for every $x\in\Zq^n$; with $S$
the set of words with $\mathrm{cnt}(w)=1$, $\ell(c)=\sum_{d\le R}N_d^S(c)$
is the number of words that deleting $c$ would uncover, for every codeword
$c$.

\subsection{The search}

The search is a large neighbourhood search~\cite{Shaw1998} over these two
maps. Starting from a code (a direct sum or a recorded code), it deletes a
set of codewords chosen among those with small $\ell$, together with some
codewords chosen at random, and then rebuilds by repeatedly placing a
codeword at a maximizer of $g$, recomputing or locally correcting $g$
after each placement. When the rebuilt code has covering radius at most
$R$ at size $M$, the same step is applied at $M-1$. Each placement is
optimal with respect to the exact coverage counts, whereas a move of
Section~\ref{sec:ls} changes one coordinate; the search performs far fewer
moves. The greedy rebuild also serves as a constructor from the empty code:
on $\Kq{8}{10}{4}$ ($q^n=1.07\cdot 10^9$) it produces a code of size
$14946$ and covering radius $4$ in three minutes, while the search of
Section~\ref{sec:ls} did not produce any covering code of that cell.

Five bounds of Table~\ref{tab:ub}, on cells with $2.8\cdot 10^8\le q^n\le
3.5\cdot 10^9$, were found by this search with all arrays in GPU memory,
within hours per cell. The remaining two are described next.

\subsection{Cells with $q^n=10^{10}$}\label{sec:lns:large}

With all arrays in the $96$\,GB of GPU memory the largest cells that can
be searched have $q^n\approx 3.5\cdot 10^9$. For larger cells the arrays
$A_1,\dots,A_R$, which are read and written only by the axis passes of
Proposition~\ref{prop:dct}, are placed in the $480$\,GB of CPU memory,
which the GPU addresses directly, while the array $\mathrm{cnt}$, which is
accessed at random by ball walks, stays in GPU memory. A transform then
takes about $24$ times longer (Table~\ref{tab:hc}). In this configuration
the search found $\Kq{10}{10}{5}\le 5799$, previously $7106$ (a direct
sum), in six hours on one machine, and $\Kq{10}{10}{6}\le 802$, previously
$826$.

\subsection{Cells on which the search fails}\label{sec:lns:fail}

The tabulated bound $\Kq{8}{10}{4}\le 11776$ is the direct sum of a
$\Kq{8}{4}{2}$ code of size $23$ and a $\Kq{8}{6}{2}$ code of size $512$.
In this code every codeword is the sole coverer of many more words than a
codeword of a random code of the same size, so deleting $k$ codewords
uncovers a correspondingly large set that a rebuild with $k-1$ codewords
must cover again. Runs of several hours found no code of size $11775$, and
likewise none below the direct sum $\Kq{9}{10}{4}\le 27\cdot 729$. A code
of size $22$ in $\Kq{8}{4}{2}$ would give $\Kq{8}{10}{4}\le 11264$; local
search, SAT solving (with and without symmetry breaking) and integer
programming neither found such a code nor proved that none exists, and
$\Kq{8}{4}{2}\in\{22,23\}$ remains open.

\section{Lower bounds}\label{sec:lb}

\subsection{The semidefinite program}

Gijswijt and Polak~\cite[Theorem~4.18]{GijswijtPolak} bound $K_q(n,R)^3$
from below by the optimum of a semidefinite program in variables indexed
by the orbits of triples of words under $\mathrm{Aut}(H(n,q))=S_q\wr S_n$,
with positive semidefiniteness constraints from the block diagonalization
of the Terwilliger algebra of the Hamming scheme, linear constraints
expressing sphere-covering inequalities, and the matrix-cut constraints of
their Proposition~4.17. The program is stated in Appendix~\ref{app:sdp}.
For $q\ge 3$ its size depends only on $n$ (at $n=11$: $339$ variables and
$126$ blocks of order at most $13$), and $q$ enters only through the
coefficients; the computations published in~\cite{GijswijtPolak} cover
$q\le 5$.

We reimplemented the reduced program with the generation of all
coefficients in exact integer arithmetic. The implementation was checked
as follows: (a) for each length $2\le n\le 10$ an explicit verified code of
covering radius $R$ (for $6\le n\le 10$, codes from Table~\ref{tab:ub})
was substituted into the program and every constraint was checked in exact
rational arithmetic, with objective value exactly $|C|^3$; (b) every
coefficient generated for six cells with $3\le q\le 7$ was compared with
the output of the authors' published code, with no difference; (c) of
$49$ values of the tables of~\cite{GijswijtPolak} recomputed, $44$ agree to
the displayed precision, and $18$ of the $22$ improved lower bounds
attempted are reproduced exactly; the four others ($\Kq{4}{7}{1}$ by $1$,
$\Kq{5}{7}{1}$ by $7$, $\Kq{5}{8}{1}$ and $\Kq{5}{8}{2}$) were computed in
double precision, and $\Kq{5}{8}{2}\ge 861$ was subsequently reproduced by
the multiprecision computation of Section~\ref{sec:lb:numerics}; (d) no
certified bound exceeds the known upper bound of its cell, checked on all
$1145$ cells of~\cite{Keri}.

\subsection{Certificates}\label{sec:lb:cert}

A dual feasible point of the program (multipliers $y_k\ge 0$ for the linear
constraints and positive semidefinite matrices $Y_b$ for the blocks) proves
the bound it attains (Lemma~\ref{lem:cert}). Each numerical dual solution
is rounded to rationals with a dyadic denominator, each $Y_b$ is displaced
into the interior of the semidefinite cone, and if a dual inequality then
fails in exact arithmetic the whole dual is multiplied by the largest
dyadic $\theta\le 1$ that restores feasibility, which multiplies the bound
by $\theta$.

The certificate is checked by a standalone program (Python standard
library only) that rebuilds the program from $(q,n,R)$ in exact integer
arithmetic, verifies $y_k\ge 0$, verifies $Y_b\succeq 0$ by an exact
$LDL^{\mathsf T}$ factorization over $\mathbb{Q}$, verifies every dual
inequality in integer arithmetic, and derives the integer bound by exact
comparison of cubes. The checker also verifies the validity of the
covering inequality named in the certificate. The checker and the solving
pipeline share the transcription of the program
of~\cite{GijswijtPolak} into coefficient generators; an error in that
transcription would not be detected by a certificate, which is why the
transcription is checked separately by the four tests above. No
floating-point computation enters the checker.

\subsection{Numerical solution}\label{sec:lb:numerics}

The optimum $|C|^3$ can exceed $10^{13}$ while the constraint constants are
$O(1)$ and the objective coefficients span ten orders of magnitude. All
computations therefore use a scaling by powers of two: the objective is
divided by the cube of the sphere-covering bound and each variable is
scaled by the square root of its objective coefficient, rounded to a power
of two, so that the inverse map to the unscaled dual is exact. On
$\Kq{6}{8}{3}$ in double precision this scaling raises the certified bound
from $183$ to $240$; the cube root of the optimum is about $239.52$.

Double-precision interior-point solvers fail on this program beyond optima
of about $1.5\cdot 10^9$ (cube root about $1150$). All $58$ certificates
of Table~\ref{tab:lb} were therefore obtained with SDPA-GMP~\cite{sdpagmp}
at $200$ or $250$ bits of working precision. The problem files carry exact
dyadic coefficients, the multiprecision dual is parsed into exact
rationals, and rounding is performed in the scaled coordinates with
denominator $2^{128}$ (one certificate: $2^{64}$), with the displacement
into the cone proportional to the rounding step. Of the $58$ certificates,
$52$ have $\theta=1$ and the other six have $\theta\ge 1-2.3\cdot
10^{-4}$; in no case does the scaling change the integer bound. The
pipeline reproduces the $512$-bit value $\Kq{5}{8}{2}\ge 861$
of~\cite{GijswijtPolak} exactly.

\subsection{Cells that are not improved}\label{sec:lb:limits}

After the cube root, the semidefinite bound exceeds the sphere-covering
bound by a factor that decreases with $q$ (for $(n,R)=(8,4)$: $1.44$ at
$q=4$ and $1.16$ at $q=7$). It therefore improves the tabulated bounds on
cells whose previous bound is close to the sphere-covering bound, which
are the cells with keys \texttt{y}, \texttt{x}, \texttt{s} and \texttt{a},
and the whole family $(n,R)=(8,2)$, $6\le q\le 21$
(Figure~\ref{fig:summary}(b)). On the cells attempted whose previous bound
is due to~\cite{HHSP2009} (key \texttt{m}, between $11\%$ and $67\%$ above
the sphere-covering bound), the certified value is at most the tabulated
one. For $R=1$ and $q\ge 6$ the bound is close to the sphere-covering
bound: for $\Kq{6}{7}{1}$ the certified value equals $6^7/36=7776$, and
for $\Kq{6}{8}{1}$ it exceeds the sphere-covering bound by $1.5\%$ and is
below the tabulated $41991$.

A certificate whose value does not exceed the tabulated bound is not an
improvement, and a certificate whose value is below the sphere-covering
bound, which occurs when the solver does not converge, carries no
information. Of the $196$ certificate files produced in this work, $58$
improve the tabulated bound, $28$ attain it exactly, $82$ certify a value
between the sphere-covering bound and the tabulated bound, and $28$
certify a value below the sphere-covering bound. Only the $58$ improving
certificates appear in Table~\ref{tab:lb} and in the ancillary files.

The fractional covering linear program does not improve any cell: every
word of $\Zq^n$ lies in exactly $|B_R|$ balls, so summing all covering
constraints gives LP optimum at least $q^n/|B_R|$, and the uniform solution
$1/|B_R|$ attains it.

\section{Verification}\label{sec:verify}

\emph{Upper bounds.} Each code, read from its published file, was checked
by four methods, two of which share no code with the other two: (i)
marking the balls of all codewords in a $q^n$-bit array; (ii)
meet-in-the-middle covering with bitsets over a split of the coordinates;
(iii) computing the minimum distance from every word of $\Zq^n$ to the
code, which also gives the exact covering radius; (iv) $R$-fold dilation
of the indicator array of the code along the coordinate axes, which
computes no distances. A fifth dilation check, written separately from the
four and included in the repository's build script, re-verified all $26$
codes from the published files before this version was prepared. The
number of distinct codewords in each file is compared with the claimed
$M$. For the largest cells the check is itself a computation: the dilation
of $\Kq{9}{10}{5}$ takes a few GB of memory and minutes, that of
$\Kq{10}{10}{5}$ tens of GB or the meet-in-the-middle method; the
ancillary README lists the resource needs.

\emph{Lower bounds.} All $196$ certificates were re-checked by the exact
checker before this version was prepared, with the outcome stated in
Section~\ref{sec:lb:limits}; none certifies a value exceeding a known
upper bound.

The ancillary files contain the $26$ codes (one codeword per line, digits
\texttt{0}--\texttt{9} then \texttt{a}--\texttt{z} for $q>10$), the $58$
certificates of Table~\ref{tab:lb}, and the checkers
\texttt{verify\_cov.py}, \texttt{verify\_independent.py} and
\texttt{certify.py}.

\subsection*{Data availability}
The codes, the certificates, the checkers, and a README are ancillary
files of the arXiv version of this paper. The search, certification, and
verification source code, together with a machine-readable table merging
the bounds of~\cite{Keri} with the post-2011 literature and the present
improvements, is available at \url{https://github.com/Mapika/coldcase}.

\subsection*{Use of artificial intelligence}
The manuscript was drafted and edited, and the search, certification, and
verification software written, by the AI system Claude (Anthropic) under
the direction of the author. The author has verified the statements and
proofs in this paper and takes full responsibility for its content.

\bibliographystyle{amsplain}
\bibliography{refs}

\appendix

\section{Implementation measurements}\label{app:impl}

All measurements were made on one machine with $64$ ARM CPU cores,
$480$\,GB of CPU memory and one GPU with $96$\,GB of memory, the CPU
memory being addressable by the GPU. Comparative figures are process CPU
time, or paired measurements in the same time window on the same machine.

\subsection{The evaluation of Section~\textup{\ref{sec:ls:eval}}}
\label{app:impl:cpu}

On three reference cells of increasing size ($\Kq{6}{6}{3}$ at $M=41$,
$\Kq{6}{8}{4}$ at $M=169$, $\Kq{8}{9}{4}$ at $M=2944$) candidate
evaluation takes $92.9\%$, $98.5\%$ and $99.6\%$ of the run time, and one
evaluated sphere pattern costs from $1.9$\,ns to $7.4$\,ns as the counter
array grows from $93$\,kB to $268$\,MB. Table~\ref{tab:engfixed} gives the
speedup at a fixed search trajectory and Table~\ref{tab:engttt} the
speedup to a fixed quality target.

\begin{table}[ht]
\centering
\begin{tabular}{lccc}
\toprule
variant & $\Kq{6}{6}{3}$ & $\Kq{6}{8}{4}$ & $\Kq{8}{9}{4}$ \\
        & 20\,000 it & 600 it & 50 it \\
\midrule
reference implementation      & $1.00\times$ & $1.00\times$ & $1.00\times$ \\
unrolled walk, precomputed offsets  & $1.35\times$ & $1.29\times$ & $1.13\times$ \\
\quad + 8-bit state array     & $1.28\times$ & $1.65\times$ & $1.65\times$ \\
\quad + 2-bit state array     & $0.82\times$ & $1.34\times$ & $2.44\times$ \\
shared sphere, 8-bit state    & $1.32\times$ & $1.95\times$ & $1.95\times$ \\
\quad + uncovered-word list   & $2.21\times$ & $5.57\times$ & $6.06\times$ \\
\quad with 2-bit state        & $2.14\times$ & $\mathbf{6.32\times}$ & $\mathbf{9.86\times}$ \\
\bottomrule
\end{tabular}
\caption{Speedup in CPU time at a fixed iteration budget, median of $9$
runs ($6$ on the largest cell). All rows perform the same search: the
final uncovered counts agree, seed by seed, across rows. The interquartile
ranges are $0.9\%$, $2.2\%$ and $5.6\%$ of the median.}
\label{tab:engfixed}
\end{table}

\begin{table}[ht]
\centering
\begin{tabular}{lcccc}
\toprule
cell & $M$ & target & identical search & all $n(q-1)$ moves \\
\midrule
$\Kq{6}{6}{3}$ & 41   & $|U|\le 30$      & $2.03\times$ $[1.99,2.13]$ & $0.28\times$ \\
$\Kq{6}{8}{4}$ & 169  & $|U|\le 20$      & $6.12\times$ $[6.06,6.62]$ & $8.03\times$ $[5.75,11.02]$ \\
$\Kq{8}{9}{4}$ & 2944 & $|U|\le 10\,800$ & $8.07\times$ $[6.19,10.72]$ & $11.04\times$ $[8.06,12.17]$ \\
\bottomrule
\end{tabular}
\caption{Median per-seed speedup in CPU time to the target over $20$
seeds, with percentile-bootstrap $95\%$ confidence intervals. In the
fourth column the optimized program reached the target at the same
iteration as the reference on every seed. The fifth column also evaluates
all $n(q-1)$ moves of each candidate codeword, which changes the search.}
\label{tab:engttt}
\end{table}

The following modifications were tested and not adopted. Caching move
evaluations between iterations requires the evaluation spheres of
successive moves to be disjoint, that is $d(c,c')>2R+2$; since $2R+2\ge n$
on every cell considered, every move invalidates every cached value.
Maintaining the list of words with $\mathrm{cnt}(w)=1$ in the same way as
$U$ pays only if $n|U_1|<\binom{n}{R}(q-1)^R$, which fails by one to two
orders of magnitude. Fixing one codeword at $0^n$ gave no measurable
change. Choosing as focus the word uncovered for the longest time instead
of a random uncovered word performed much worse. Restarts reduced the time
to a hard target on $\Kq{6}{6}{3}$ (speedup $4.2\times$, $p=0.002$) and
had no effect on the cells of Table~\ref{tab:ub} (coefficient of variation
of the time to target $0.20$--$0.47$; $p=1.0$).

\subsection{Choice of the configuration of Section~\textup{\ref{sec:ls:config}}}
\label{app:impl:arena}

The four implementations were compared under one protocol: identical
interface and resource limits, a set of development instances, a set of
held-out evaluation instances, $15$ seeds per instance, and a score
computed from verified output only (a verified covering code scores
$1000$, a partial cover scores minus its uncovered fraction, and invalid
output scores below every valid partial cover). The restricted searches
scored highest on the held-out instances, the search of
Section~\ref{sec:ls:eval} was fastest on large cells, the SIMD variant on
cache-resident cells, and single-threaded processes scored higher than
multi-threaded ones except on the largest cells. Under the same protocol
the combined program solved $14$ of $20$ held-out instances, against $12$
of $20$ for the best single implementation.

\subsection{GPU measurements for Section~\textup{\ref{sec:lns}}}
\label{app:impl:gpu}

\emph{Ports of the local search.} A formulation of candidate evaluation as
integer matrix products has contracted dimension $nq\approx 50$ and did
not reach CPU throughput. A port running $256$ independent search
processes in one persistent kernel reached $0.09$, $0.10$ and $1.24$ times
the throughput of the $64$ CPU cores on cells with $q^n=4.7\cdot 10^4$,
$1.7\cdot 10^6$ and $1.3\cdot 10^8$.

\emph{Memory placement for $q^n>3.5\cdot 10^9$
(Section~\textup{\ref{sec:lns:large}}).} The GPU addresses the CPU memory
through a cache-coherent interconnect with native atomic operations. The
arrays $A_1,\dots,A_R$ are placed in CPU memory and $\mathrm{cnt}$ in GPU
memory (for $\Kq{10}{10}{5}$: $240$\,GB and $21$\,GB). The placement must
be fixed explicitly: allocation through the unified-memory allocator is
about $30$ times slower than \texttt{malloc} with direct access through
the coherent page tables, and with plain \texttt{malloc} the operating
system migrates pages touched by the GPU into GPU memory until it is full;
binding the allocations to the CPU memory node with \texttt{mbind} makes
the placement deterministic. The outputs were identical across all
placements. Table~\ref{tab:hc} gives the time of one transform.

\begin{table}[ht]
\centering
\begin{tabular}{lrrrr}
\toprule
cell & $q^n$ & arrays & placement & transform \\
\midrule
$\Kq{8}{10}{4}$ & $1.1\cdot 10^9$ & $24$\,GB & GPU memory & $0.143$\,s \\
$\Kq{8}{10}{4}$ & $1.1\cdot 10^9$ & $24$\,GB & CPU memory (bound) & $3.4$\,s \\
$\Kq{8}{10}{4}$ & $1.1\cdot 10^9$ & $24$\,GB & CPU memory (managed) & $8.7$\,s \\
$\Kq{9}{10}{5}$ & $3.5\cdot 10^9$ & $91$\,GB & CPU memory (bound) & $22$--$26$\,s \\
$\Kq{10}{10}{5}$ & $10^{10}$ & $260$\,GB & CPU memory (bound) & $73$--$110$\,s \\
\bottomrule
\end{tabular}
\caption{Time of one transform of Proposition~\ref{prop:dct} by placement
of $A_1,\dots,A_R$, on an idle machine; $\mathrm{cnt}$ is in GPU memory in
all rows.}
\label{tab:hc}
\end{table}

\emph{Ball walks.} Ball walks use a precomputed table of packed
coordinate-shift patterns, so that each visited word costs one table load
and at most $R$ additions in shared memory, without division or modulo
operations. Splitting each walk over up to $8192$ thread blocks (instead
of one) makes single-word updates $30$ times faster on $\Kq{8}{9}{4}$
($0.80\to 0.027$\,ms) and $37$ times faster on $\Kq{8}{10}{4}$ ($1.34\to
0.036$\,ms), batched candidate evaluations $3.7$--$4.3$ times faster, and
a complete greedy construction on $\Kq{8}{9}{4}$ $1.4$ times faster at
identical output. Recording each word's last coverer during the marking
pass, so that all $\ell(c)$ are obtained from one linear scan, replaces
one transform but is slower ($28$ vs.\ $17$\,ms on $\Kq{8}{9}{4}$, $669$
vs.\ $145$\,ms on $\Kq{8}{10}{4}$), since the marking pass performs
$M\cdot|B_R|$ atomic operations while the transform streams $O(nRq^n)$
words.

\subsection{Baselines}\label{app:impl:baseline}

\emph{Integer programming.} The set-covering integer program with one
binary variable and one constraint per word of $\Zq^n$ has $q^n\cdot|B_R|$
nonzero coefficients. Among the improved cells with $q^n\le 10^8$ only
$\Kq{6}{7}{3}$ ($1.4\cdot 10^9$ nonzeros, about $17$\,GB) fits in
$480$\,GB of memory; the other $n=6$ cells need between $137$\,GB and
$7.2$\,TB for the constraint matrix. On $\Kq{6}{7}{3}$, HiGHS with $8$
threads and one hour per run produced neither a feasible cover nor a dual
bound above the sphere-covering bound, in minimization mode and in
feasibility mode at sizes $245$ and $227$.

\emph{Plain tabu search.} A tabu search from the same code base without
the modifications of Section~\ref{sec:ls:eval} was run against the program
of Section~\ref{sec:ls:config} with $8$ CPU-hours per cell, identical
seeds and the same time window:

\begin{center}
\begin{tabular}{lccc}
\toprule
cell (target $M$) & \cite{Keri} & plain tabu & Section~\ref{sec:ls:config} \\
\midrule
$\Kq{8}{6}{4}$ (19) & 20 & solved & solved \\
$\Kq{12}{6}{4}$ (40, 39) & 41 & solved, solved & solved, solved \\
$\Kq{13}{6}{4}$ (45) & 46 & solved & solved \\
$\Kq{14}{6}{4}$ (51, 50) & 52 & solved, solved & solved, solved \\
$\Kq{15}{6}{4}$ (58, 57) & 59 & solved, failed & solved, solved \\
$\Kq{6}{7}{3}$ (245, 227) & 246 & solved, failed & solved, failed \\
\bottomrule
\end{tabular}
\end{center}

The $n=6$ improvements other than $\Kq{15}{6}{4}\le 57$ are thus
reachable by a plain tabu search with this budget. Neither program reached
$\Kq{6}{7}{3}\le 227$ from scratch with this budget; that code was
obtained by the sequence of searches at decreasing $M$ described in
Section~\ref{sec:ls:search}.

\section{The certified program}\label{app:sdp}

The reduced program of~\cite[Theorem~4.18]{GijswijtPolak} (non-binary
case, $q\ge 3$) is a minimization in variables $x\in\mathbb{R}^N$ indexed
by the orbits of triples of words under $\mathrm{Aut}(H(n,q))=S_q\wr S_n$:
\begin{equation}\label{eq:sdp}
\begin{aligned}
\min\; c^{\mathsf T}x
\quad\text{s.t.}\quad
& x\ge 0,\qquad
\langle l_k,x\rangle + l_k^0 \ge 0 \quad (k=1,\dots,m),\\
& C_b + \textstyle\sum_{v=1}^{N} x_v A_b^v \succeq 0 \quad (b=1,\dots,B),
\end{aligned}
\end{equation}
where the data $c$, $(l_k,l_k^0)$, $(C_b,A_b^v)$ are rationals determined
by $(q,n,R)$: the objective and the linear constraints encode
sphere-covering inequalities and the matrix-cut inequalities
of~\cite[Proposition~4.17]{GijswijtPolak}, and the semidefinite blocks
come from the block diagonalization of the Terwilliger algebra of the
nonbinary Hamming scheme~\cite[\S 3]{GijswijtPolak}. By
\cite[Theorem~4.18]{GijswijtPolak}, every code $C\subseteq\Zq^n$ of
covering radius at most $R$ gives a feasible point $x_C$
of~\eqref{eq:sdp} with $c^{\mathsf T}x_C=|C|^3$; hence
$K_q(n,R)^3\ge\mathrm{OPT}$, the optimum of~\eqref{eq:sdp}.

A certificate for $(q,n,R)$ consists of $y\in\mathbb{Q}^m_{\ge 0}$ and
rational symmetric positive semidefinite matrices $Y_1,\dots,Y_B$, stored
over one common denominator. Let
\[
d_v \;=\; \sum_k y_k\, l_k[v] \;+\; \sum_b \langle Y_b, A_b^v\rangle
\quad (v=1,\dots,N),
\qquad
d_0 \;=\; \sum_k y_k\, l_k^0 \;+\; \sum_b \langle Y_b, C_b\rangle .
\]

\begin{lemma}\label{lem:cert}
If $d_v\le c_v$ for every $v$, then every feasible point $x$
of~\eqref{eq:sdp} satisfies $c^{\mathsf T}x\ge -d_0$; hence
$K_q(n,R)\ge\bigl\lceil(-d_0)^{1/3}\bigr\rceil$.
\end{lemma}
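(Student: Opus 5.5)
This is standard weak duality. Fix a feasible point $x$ of~\eqref{eq:sdp}. I will pair each constraint with its multiplier and sum. For the linear constraints, $y_k\ge 0$ and $\langle l_k,x\rangle+l_k^0\ge 0$ give
\[
\sum_k y_k\bigl(\langle l_k,x\rangle + l_k^0\bigr)\ \ge\ 0 .
\]
For the semidefinite blocks, the trace inner product of two positive semidefinite matrices is nonnegative (write $Y_b=\sum_i \lambda_i u_iu_i^{\mathsf T}$ with $\lambda_i\ge 0$, so $\langle Y_b,Z\rangle=\sum_i\lambda_i u_i^{\mathsf T}Zu_i\ge 0$). Hence
\[
\Bigl\langle Y_b,\ C_b+\textstyle\sum_v x_vA_b^v\Bigr\rangle\ \ge\ 0\quad\text{for each } b .
\]

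Adding these inequalities and regrouping by linearity gives
\[
\sum_v x_v d_v + d_0\ \ge\ 0,
\]
by the definitions of $d_v$ and $d_0$. Next, $x\ge 0$ and $d_v\le c_v$ give $c^{\mathsf T}x\ge\sum_v x_v d_v\ge -d_0$. This is exactly where the sign constraint $x\ge 0$ in~\eqref{eq:sdp} is needed, and it is the only point requiring care; otherwise one would need $d_v=c_v$.

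For the consequence, let $C$ be a code with covering radius at most $R$ and $|C|=K_q(n,R)$. By \cite[Theorem~4.18]{GijswijtPolak}, as recalled above, $x_C$ is feasible with $c^{\mathsf T}x_C=|C|^3$, so $K_q(n,R)^3\ge -d_0$. If $-d_0\le 0$ the ceiling statement is trivial, reading the cube root as real and noting $K_q(n,R)\ge 1$. Otherwise monotonicity of the cube root gives $K_q(n,R)\ge(-d_0)^{1/3}$, and integrality of $K_q(n,R)$ gives the ceiling. There is no real obstacle: the argument needs only the PSD inner-product fact and careful sign bookkeeping. The rationality of the data plays no role in the proof; it only makes the hypotheses exactly checkable.
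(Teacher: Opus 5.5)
Your proposal is correct and is the same weak-duality argument as the paper's. The paper writes $c^{\mathsf T}x+d_0$ as one sum of three nonnegative pieces, $\sum_v(c_v-d_v)x_v$, the linear pairings and the block pairings, which is your two-step chain merged into a single identity, and then concludes from $K_q(n,R)^3\ge\mathrm{OPT}\ge -d_0$. Your treatment of the case $-d_0\le 0$ is a small detail the paper leaves implicit.
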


\begin{proof}
For feasible $x$,
\[
c^{\mathsf T}x + d_0
= \sum_v (c_v-d_v)\,x_v
+ \sum_k y_k\bigl(\langle l_k,x\rangle + l_k^0\bigr)
+ \sum_b \Bigl\langle Y_b,\; C_b+\sum_v x_v A_b^v\Bigr\rangle
\;\ge\; 0,
\]
since each summand is a product of nonnegative quantities ($x\ge 0$ and
$c_v-d_v\ge 0$; $y_k\ge 0$ and the linear constraint value $\ge 0$; the
inner product of two positive semidefinite matrices is nonnegative). The
bound on $K_q(n,R)$ follows from $K_q(n,R)^3\ge\mathrm{OPT}\ge -d_0$.
\end{proof}

If the rounded dual has $d_v>c_v$ for some $v$, the certificate is
multiplied by the largest dyadic $\theta\le 1$ with $\theta d_v\le c_v$ for
all $v$, which gives the bound $-\theta d_0$. The checker verifies the
hypotheses of Lemma~\ref{lem:cert} (nonnegativity of $y$, positive
semidefiniteness of each $Y_b$ by exact $LDL^{\mathsf T}$ factorization
over $\mathbb{Q}$, and the $N$ inequalities $d_v\le c_v$ in integer
arithmetic) and rebuilds the data $c,(l_k,l_k^0),(C_b,A_b^v)$ from
$(q,n,R)$; the transcription of that data from~\cite{GijswijtPolak} is
the part of the computation not covered by the certificate, and it is
checked as described in Section~\ref{sec:lb}.

\section{The code establishing $\Kq{6}{8}{4}\le 166$}\label{app:code}
The code is printed here because it is the largest relative improvement
of Table~\ref{tab:ub} that fits on a page (the codes for
$\Kq{7}{9}{5}\le 240$ and $\Kq{8}{10}{5}\le 1883$ are longer). Each row
lists four codewords of $\mathbb{Z}_6^8$; the machine-readable version is
the ancillary file \texttt{K6\_8\_4\_M166.txt}.

{\scriptsize
\begin{verbatim}
01430421  12541532  23052043  30103154
45214205  50325310  03405225  14510330
25021441  30132552  41243003  52354114
03134001  14245112  25350223  30401334
51512445  52010550  02442243  13553304
24004415  35115520  40220031  51331142
02303231  13414342  24525453  35030504
40141015  51252120  05143544  10254055
21305100  32410211  43521322  54032433
04301535  13432040  20543151  31054202
42105313  53210434  04043240  15154351
20205402  31310513  42421024  53532135
05111133  10222244  21333355  32444400
43555511  54000022  01045452  12150503
23201014  34312125  45423230  50534341
00544125  11055230  22100341  33211452
44322503  55433014  02231500  13342011
24453122  35504233  40015344  51120455
05223315  10334420  21445531  32550042
43001153  54112204  03411050  14542101
25033212  30144323  41255434  52300445
04450305  15501410  20012521  31123032
42234143  53345254  00052413  11103524
22214035  33324140  44430251  55541302
02115012  13220123  24331234  35442345
40553450  51004501  01024314  12135425
23240530  34351041  45402152  50513203
05352332  10413443  21514554  52025505
43130110  54241221  04205542  15315053
20420104  31531215  42042320  53153431
01213111  12324222  23435333  34540444
45051555  42131403  00300252  11411303
22522414  33033525  44104030  55255141
05535024  10040135  21151240  32202351
44313402  54424513  01500043  12051134
23122205  34233310  45344421  50455532
03314404  10402515  25530020  31021031
41132242  52243353  12005001  03402520
31455113  23344502  55203541  32515205
30325000  02023150  14021545  54142054
34124521  03523252  45250342  00522424
44553234  53044110
\end{verbatim}

}

\end{document}